\theoremstyle{plain}
\newtheorem{thm}{Theorem}[section]
\newtheorem{prop}[thm]{Proposition}
\newtheorem{lem}[thm]{Lemma}
\newtheorem{cor}[thm]{Corollary}
\theoremstyle{remark}
\theoremstyle{definition}
\newtheorem{cons}[thm]{Construction}
\newtheorem{dfn}[thm]{Definition}
\newtheorem{rem}[thm]{Remark}
\begin{document}
\title{Cellular structure for the Herzog--Takayama Resolution}

\author{Afshin Goodarzi}
\thanks{}

\address{Royal Institute of Technology, Department of Mathematics, S-100 44, Stockholm, Sweden}
\email{afshingo@kth.se}
\maketitle

\begin{abstract}
 Herzog and Takayama constructed explicit resolutions for the class of so called ideals with a regular linear 
 quotient. This class contains all matroidal and stable ideals. The resolutions of matroidal and stable ideals 
 are known to be cellular. In this note we show that the Herzog--Takayama resolution is also cellular.
\end{abstract}
\section{Introduction}
\noindent In this section we describe our problem. The exact definitions will appear in the next sections.
Also, we refer to the books by Hatcher~\cite{Ha}, Peeva~\cite{P} and Milller and Sturmfels~\cite{MS}, for 
undefined terminology.\\ 
The idea of determining free resolutions of monomial ideals in a polynomial ring using labelled chain 
complexes of topological objects was introduced by Bayer, Peeva and Sturmfels \cite{BPS} by considering the 
labelled chain complex of simplicial complexes. Every monomial ideal admits such a resolution, namely the Taylor 
resolution, although it is most of the time far from being minimal.  This method was generalized in \cite{BS}, 
where the authors constructed a regular cell complex (the hull complex) for any monomial ideal. The resolution
obtained from the hull complex is much shorter than the Taylor resolution in 
general. Batzies and Welker in \cite{BW} developed an algebraic analogue of Forman's discrete Morse theory to 
minimize the resolutions.\\
Unfortunately, even if we consider the more general case of CW complexes, as the authors did  in \cite{BW}, 
it is not the case that one can describe the minimal free resolution of a monomial ideal using these 
methods. A family of examples showing this is given in \cite{V}. However, for a monomial ideal with a linear quotient 
it is shown in \cite[Proposition 4.3.]{BW} that there exists a CW-complex which supports the minimal free 
resolution of the ideal. The minimal free resolution of a class of ideals with a linear quotient, the so 
called ideals with a \emph{regular linear quotient} (see Definition~\ref{de}), is determined by
Herzog and Takayama in \cite{HT}. This class contains all matroidal ideals, stable ideals and square-free 
stable ideals. In \cite{NPS} the authors have given a polytopal complex that supports the minimal free 
resolution of matroidal ideals. In \cite{M}, Mermin constructed a regular cell complex that supports the 
Eliahou-Kervaire resolution for stable ideals. On the other hand, since the square free part of the 
Eliahou-Kervaire resolution resolves square-free stable ideals minimally, such a construction for square-free stable 
ideals exists. So it is natural to ask whether the Herzog-Takayama resolution is cellular or not? The aim of 
this paper is to give a positive answer to the question above by explicitly constructing a regular cell 
complex that supports the Herzog-Takayama resolution for ideals with a regular linear quotient. 

\section{Monomial Ideals with a Linear Quotient and Cellular Resolutions}\label{s2}
\noindent Let $K$ be a field and $S=K[x_1,\ldots, x_n]$ be the polynomial ring in $n$ variables over $K$. For a monomial 
ideal $I$ in $S$, we denote by $M(I)$ the set of all monomials in $I$. We also denote by $G(I)$ the unique 
minimal set of generators of $I$. We say that $I$ has a \emph{linear quotient}, if $G(I)$ admits an 
\emph{admissible order}, that is a linear ordering $u_1,\ldots,u_m$ of monomials in $G(I)$ such that the colon ideal 
$\langle u_1,\ldots,u_{j-1} \rangle:u_j$ is generated by a subset $\mathfrak{q}(u_j)$ of variables 
for all $2\leq j\leq m$. To any admissible order of $I$ one can associate a unique 
\emph{decomposition function}, that is, a function $\mathfrak{g}:M(I)\rightarrow G(I)$ that maps a 
monomial $v$ to $u_j$, if $j$ is the smallest 
index for which $v\in I_j$, where $I_j:=\langle u_1,\ldots,u_{j} \rangle$ (see \cite{HT} for more on
decomposition functions).\\
It is not difficult to check that an ordering $u_1,\ldots,u_m$ of $G(I)$ is an 
admissible order if and only if it satisfies the following \emph{shelling type condition}, considered by 
Batzies 
and Welker \cite[Page 157]{BW}

\begin{equation}\label{shelling}
\begin{split}
 & \text{For all $j$ and $i<j$ there exists $k<j$ such that } \\
& \mbox{lcm}(u_k,u_j)=x_t\cdot u_j \text{ for some $x_t$ and $\mbox{lcm}(u_k,u_j)$}\\
& \text{divides $\mbox{lcm}(u_i,u_j)$.}
\end{split}
\end{equation}
 It is known \cite[Lemma 2.1]{JZ} that for a monomial ideal with a linear quotient there always 
exists a degree increasing admissible order. So, throughout this note all admissible orders
are considered to be increasing.\\
The following result was proved in \cite[Proposition 4.3]{BW} by using algebraic discrete Morse theory. 
Here, we give an alternative proof based on the observation that the resolutions obtained by iterated mapping 
cones are supported by CW complexes that can be constructed by iterated (topological) mapping cones. This is 
indeed the idea behind our construction in the next section.
\begin{prop}\label{mapping}
For any ideal $I$ with a linear quotient, there exists a CW complex $X_I$ that supports the minimal 
free resolution of $S/I$. 
\end{prop}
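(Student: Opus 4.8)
The plan is to build $X_I$ by induction on the number of generators, realizing at each stage a topological mapping cone whose labelled cellular chain complex is precisely the algebraic mapping cone that produces the minimal resolution of $S/I$. Write $G(I)=\{u_1,\dots,u_m\}$ in an admissible order and put $I_j=\langle u_1,\dots,u_j\rangle$. For each $j\ge 2$ one has the short exact sequence
\begin{equation*}
0\longrightarrow \bigl(S/(I_{j-1}:u_j)\bigr)(-\deg u_j)\xrightarrow{\ \cdot u_j\ } S/I_{j-1}\longrightarrow S/I_j\longrightarrow 0,
\end{equation*}
and by the linear quotient hypothesis $I_{j-1}:u_j=\langle \mathfrak q(u_j)\rangle$ is generated by variables, hence resolved minimally by the Koszul complex on $\mathfrak q(u_j)$, which is supported by the full simplex $\Delta_j$ on the vertex set $\mathfrak q(u_j)$. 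Since the left-hand map is injective, $S/I_j$ is its cokernel, so the algebraic mapping cone of any chain map lifting $\cdot u_j$ resolves $S/I_j$; iterating this resolves $S/I$. The shelling-type condition~\eqref{shelling} is exactly what allows the comparison maps to be chosen with entries in the maximal ideal, so no unit entries arise and the iterated mapping cone is minimal.

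First I would set up the topological induction. Take $X_1$ to be a single vertex labelled $u_1$, whose labelled cellular chain complex is the minimal resolution of $S/I_1$. Assuming $X_{j-1}$ has been built so that its labelled cellular chain complex $\mathcal F_{X_{j-1}}$ is the minimal resolution of $S/I_{j-1}$, I would realize the comparison map $\psi_j$ lifting $\cdot u_j$ by a cellular map $f_j\colon \Delta_j\to X_{j-1}$ and define $X_j$ to be the topological mapping cone $X_{j-1}\cup_{f_j} C\Delta_j$. The cellular chain complex of the mapping cone of a cellular map is the algebraic mapping cone of the induced map on cellular chains; so, attaching the monomial labels dictated by that algebraic cone — the apex carrying $u_j$, and the cone cell over a face $\sigma\subseteq\mathfrak q(u_j)$ carrying $u_j\cdot\prod_{x\in\sigma}x$ — makes $\mathcal F_{X_j}$ the (minimal) resolution of $S/I_j$. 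Setting $X_I=X_m$ completes the induction and proves the proposition.

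The main obstacle is the middle step: producing an honest cellular map $f_j$ whose induced map on labelled chains is the algebraic comparison map, and verifying that its mapping cone is a labelled CW complex. Two points must be checked. First, the comparison map can be taken combinatorially, sending each Koszul basis element indexed by a face $\sigma$ of $\Delta_j$ to a single basis element of $\mathcal F_{X_{j-1}}$ with a monomial coefficient; this is where the decomposition function $\mathfrak g$ and condition~\eqref{shelling} enter, as they single out, for each $\sigma$, the target cell and the monomial quotient of the labels. Second, such a boundary-compatible cell-to-cell assignment is exactly the data of a cellular map $f_j\colon\Delta_j\to X_{j-1}$, which one constructs skeleton by skeleton, extending over each simplex using the contractibility of the relevant target cells; and the mapping cone of a cellular map between CW complexes is again a CW complex. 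I expect the bookkeeping that the resulting differential on $\mathcal F_{X_j}$ matches the signs and monomial coefficients of the algebraic mapping cone to be the most delicate part. Once it is in place, minimality is inherited from the algebraic side, and the claim follows.
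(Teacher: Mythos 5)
Your overall strategy is exactly the paper's: induct along a degree-increasing admissible order, resolve $S/\langle\mathfrak{q}(u_j)\rangle$ minimally by the Koszul complex supported on a simplex, lift the map $\cdot u_j$ to a cellular map $\Delta_j\to X_{j-1}$, and take the topological mapping cone, whose labelled cellular chain complex is the homological mapping cone; minimality is inherited on the algebraic side. But your first ``point to be checked'' is a genuine error at this level of generality. You claim the comparison map can always be taken combinatorially, sending each Koszul basis element indexed by $\sigma\subseteq\mathfrak{q}(u_j)$ to a \emph{single} basis element of $\mathcal{F}_{X_{j-1}}$ with a monomial coefficient, with the decomposition function $\mathfrak{g}$ and condition~\eqref{shelling} singling out the target. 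That is precisely what \emph{regularity} of the decomposition function buys (it is the map $\delta$ in the Herzog--Takayama construction), and the proposition is about arbitrary ideals with linear quotients, not regular ones. Concretely, if every $\psi_j$ were cell-to-cell with coefficients $\pm(\text{monomial})$, then by induction all differential matrices of the resulting minimal resolution would have entries of that form; the example of Reiner and Welker~\cite{RW}, cited in the remark immediately following this proposition, gives an ideal with a linear quotient whose minimal free resolution cannot be written with coefficients $\pm1$, so no such combinatorial lift exists in general. (This is also why $X_I$ fails to be regular in general.) Relatedly, your attribution of minimality to condition~\eqref{shelling} is off: \eqref{shelling} characterizes admissibility of the order, while minimality of the iterated cone comes from choosing the order degree increasing (possible by~\cite[Lemma 2.1]{JZ}) together with~\cite[Lemma 1.5]{HT}, the Koszul resolution being linear.

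The repair is to drop the combinatorial claim, at which point your argument collapses onto the paper's proof. Projectivity gives \emph{some} algebraic lift $\psi_j$ of $\cdot u_j$ to the minimal resolutions; one realizes it topologically skeleton by skeleton --- each Koszul generator must be mapped into the subcomplex of cells whose labels divide the relevant monomial, and these subcomplexes are acyclic, so the extensions exist and the induced map on cellular chains can be arranged to equal $\psi_j$. The mapping cone of a cellular map between CW complexes is again a CW complex (not regular, in general), and its labelled chain complex is the algebraic mapping cone, which resolves $S/I_j$ minimally by~\cite[Lemma 1.5]{HT}. The paper indeed flags, right after the proof, that defining these cellular maps is exactly as hard as defining the comparison maps; the explicit cell-to-cell description you wanted is available only in the regular case treated in the rest of the paper.
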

\begin{proof}
Let $u_1, \ldots , u_m$ be a degree increasing admissible order of $G(I)$. Assume that we have inductively 
constructed a CW complex $X_{j-1}$ that supports the minimal free resolution of $S/I_{j-1}$. Also, let $\Delta$ be the simplex associated to the Koszul 
resolution of  $S/\langle \mathfrak{q}(u_j)\rangle$. Lifting the left non-zero map of the short exact 
sequence 
\begin{equation*}
0\longrightarrow S/\langle \mathfrak{q}(u_j)\rangle \longrightarrow S/I_{j-1} \longrightarrow S/I_{j} 
\longrightarrow 0,
\end{equation*}
to the minimal free resolutions of $S/\langle \mathfrak{q}(u_j)\rangle$ and $S/I_{j-1}$, then induces a 
cellular map of $\Delta$ into $X_{j-1}$. Therefore the resolution of $S/I_j$ obtained by the 
homological mapping cone has a cellular structure that is the topological mapping cone of the cellular map of 
$\Delta$ into $X_{j-1}$. The fact that this resolution is minimal follows from \cite[Lemma 1.5]{HT}
\end{proof}
\begin{rem}
 Reiner and Welker~\cite[Section 5]{RW} gave an example of an ideal with a linear quotient for which the 
 differential matrices in the minimal free resolution cannot be written using only $\pm 1$ coefficients. 
 This shows that the cell complex $X_I$ in Proposition~\ref{mapping} is not regular in general.
\end{rem}

 The difficulty of constructing the cell complex $X_I$ in practice is then how to define 
the cellular 
maps in each step, which is indeed as difficult as defining the comparison maps $S/\langle 
\mathfrak{q}(u_j)\rangle \rightarrow S/I_{j-1}$. This has already been observed by Herzog and Takayama in 
\cite{HT}, where the authors defined a subfamily of ideals with a linear quotient that have a decomposition 
function with a similar behaviour as the stable ideals'.\\
\begin{dfn}\label{de} A decomposition function 
$\mathfrak{g}$ is said to be \emph{regular}, if $\mathfrak{q}(\mathfrak{g}(yu_j))\subseteq \mathfrak{q}(u_j)$, 
for any $j$ and any $y\in\mathfrak{q}(u_j)$. We say that $I$ has a \emph{regular linear quotient}, if it admits 
an admissible order with a regular decomposition function.
\end{dfn}
  The following equation for the decomposition 
function of ideals having regular linear quotient is proved in \cite{HT}, we will frequently use it
\begin{equation}\label{commute}
 \mathfrak{g}(y\mathfrak{g}(zu))=\mathfrak{g}(z\mathfrak{g}(yu))\quad \forall u\in G(I) 
 \quad \forall z,y\in \mathfrak{q}(u).
\end{equation}
It can be easily checked that the set of ideals with a regular linear quotient contains all stable and 
square-free stable ideals. On the other hand it is known \cite[Theorem 1.10]{HT} that the reverse 
lexicographic order on minimal generators of a matroidal ideal is an admissible order with a regular 
decomposition function.

\begin{dfn}
Let $\sigma$ be a subset of $V=\{x_1,\ldots,x_n\}$ and assume the total ordering $x_1<\ldots<x_n$ on $V$. Then
for an element $y\in\sigma$ set $\alpha(\sigma;y)$ to be the number of variables $z$ in $\sigma$ such that $z<y$. 
\end{dfn}
\begin{cons}
Let $I$ be a monomial ideal with a regular linear quotient. The \emph{Herzog-Takayama} resolution $\mathbf{F}_I$ of $S/I$ has basis denoted

$$\mathcal{B}=\{1\}\cup\{f(\sigma; u)\mid u\in G(I)\text{ and }\sigma\subseteq \mathfrak{q}(u) \},$$
where the element $f(\sigma; u)$ has homological degree $\mid \sigma\mid +1$. If $\sigma$ is non-empty, then we define 

$$\mu(f(\sigma;u))=\sum_{y\in\sigma} (-1)^{\alpha(\sigma;y)}yf(\sigma-\{y\};u),$$
and
$$\delta(f(\sigma;u))=\sum_{y\in\sigma} (-1)^{\alpha(\sigma;y)}\frac{yu}{\mathfrak{g}(yu)}f(\sigma-\{y\};\mathfrak{g}(yu)),$$
by the convention that $f(\tau;v)=0$, if $\tau$ is not a subset of $\mathfrak{q}(v)$. The differential in $\mathbf{F}_I$ is given by $\partial=\delta-\mu$, in homological degrees $>1$ and otherwise $\partial(f(\emptyset,u))=u$. 
\end{cons}\qed

\begin{thm}{\cite[Theorem 1.12]{HT}}
Let $I$ be a monomial ideal with a regular linear quotient. Then the Herzog-Takayama resolution $\mathbf{F}_I$ is the minimal free resolution of $S/I$.
\end{thm}

\section{Monomial Ideals with a Regular Linear Quotient and Their Associated Regular Cell Complexes}\label{s3}
We now associate a regular cell complex to an ideal with a regular linear quotient as follows: 

 \begin{cons}\label{cons}
  Let $I$ be a monomial ideal with a regular linear quotient with respect 
 to the admissible order $u_1,\ldots,u_m$ of $G(I)$. Also let $\mathfrak{g}$ be its decomposition function. 
 We will construct a regular labelled cell complex $X_I$ inductively, as follows:
 \begin{enumerate}
  \item[(i)] Let $X_1$ be the $0$-simplex with the labelled vertex $\{u_1\}$,
  \item[(ii)] assume that the regular labelled cell complex $X_{j-1}$ with vertices $u_1,\ldots,u_{j-1}$ 
  is constructed,
  \end{enumerate}
  For what follows, let $u=u_j$ be a point outside $X_{j-1}$ and for a subset $\sigma$ of $\mathfrak{q}(u)$, 
  define $\mathfrak{g}(\sigma;u):=\mathfrak{g}(u\cdot\prod_{y\in\sigma}y).$
Also, denote by $X(u)$
 the subcomplex of $X_{j-1}$, induced by $\{ \mathfrak{g}(\sigma;u) \}$ for all non-empty 
 subsets $\sigma$ of $\mathfrak{q}(u)$. We will show that $X(u)$ is an $(l-1)$-dimensional ball in 
 Lemma~\ref{subdivision} and Proposition~\ref{sh} below, where $l=\left|\mathfrak{q}(u)\right|$.
  
  \begin{enumerate}
  \item[(iii)] glue an $l$-ball $B(u)$ 
 along its boundary 
 to $X(u)\cup(\{u\}\ast\partial X(u))$.

 \end{enumerate}
 Having defined the new maximal cell $B(u)$, what remains in order to have a complete description of $X_j$ is to 
 give a cell decomposition for $\{u\}\ast\partial X(u)$. For this purpose let us for any proper subset $\sigma$
 of $\mathfrak{q}(u)$, denote by $X(\sigma,u)$ the 
 subcomplex of $X_{j-1}$ induced by the vertices $\{ \mathfrak{g}(\tau;u) \}$ for all non-empty 
 subsets $\tau$ of $\sigma$. The fact that $X(\sigma,u)$ is a ball of dimension $|\sigma|-1$ is needed for the 
 next step, we will discuss it in the end of this section in remark~\ref{xsigma}.
 \begin{enumerate}
  \item[(iv)] define $X_j$ to be $X_{j-1}\cup \{B(u)\}\bigcup_{\sigma\subset\mathfrak{q}(u)}\{B(\sigma, u) \}$,
  where $B(\sigma, u)$ denotes $\{u\}\ast X(\sigma,u)$.
 \end{enumerate}

 \end{cons}\qed
 \begin{rem}\label{facts} The following simple facts can be seen from the construction above:\\
 
\begin{enumerate}
  \item[(i)]For a non-empty subset $A$ of $G(I)$, the following are equivalent:
  
   \item[$\bullet$] There is a cell $B$ of $X_I$ whose set of vertices coincides with $A$,
   \item[$\bullet$] There is some $u\in G(I)$ and some $\sigma\subseteq \mathfrak{q}(u)$ such that 
   $$A=\{\mathfrak{g}(\tau;u)|\tau\subseteq\sigma\},$$
   \item[$\bullet$] $B=B(\sigma, u)$.\\
  Moreover, $B(\sigma, u)$ corresponds to the base element $f(\sigma;u)$ of the Herzog-Takayama resolution.
\item[(ii)] Maximal cells of $B(\sigma,u)$ are precisely:
\item[$\bullet$] $B(\sigma-\{y\},u)$ for all $y\in\sigma$, and
\item[$\bullet$] $B(\sigma-\{y\},\mathfrak{g}(yu))$ for those $y\in\sigma$ such that $\sigma-\{y\}\subseteq 
\mathfrak{q}(\mathfrak{g}(yu))$.
\end{enumerate}
 \end{rem}
\qed
 
 We now study some combinatorial properties of the cell complex $X_I$ constructed 
 in~\ref{cons}, via a simplicial subdivision of it. 
 We follow the notation in Construction~\ref{cons}. 
 Let us start by giving the simplicial subdivision of $X_I$. 
 
 \begin{cons}
   We construct a simplicial complex $\Lambda_I$ inductively as follows:
 \begin{enumerate}
  \item[(i)] Let $\Lambda_1$ be the $0$-simplex $\{u_1\}$,
  \item[(ii)] Assume that $\Lambda_{j-1}$ is constructed,
  \item[(iii)] Take a cone with apex $\{u\}$ over the subcomplex $\Lambda(u)$ of $\Lambda_{j-1}$, induced by 
  $\{ \mathfrak{g}(\sigma;u) \}$, for all subsets $\sigma$ of $\mathfrak{q}(u)$, to obtain 
  $$\Lambda_j=\Lambda_{j-1}\bigcup\left(\{u\}\ast\Lambda(u)\right).$$
 \end{enumerate}
 \end{cons}\qed

 \noindent The following is immediate. 
 \begin{lem}\label{subdivision}
  $\Lambda_j$ (respectively $\Lambda(u)$) is a simplicial subdivision of $X_j$ (respectively $X(u)$).\qed
  \end{lem}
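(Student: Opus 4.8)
The plan is to prove the two assertions together by induction on $j$, strengthening the statement so that it also records the subdivision of the auxiliary subcomplexes $X(\sigma,u_r)$ introduced in Construction~\ref{cons}. Precisely, I would carry the inductive claim $H(j)$: \emph{$\Lambda_j$ is a simplicial subdivision of $X_j$, and for every $r\le j$ and every $\sigma\subseteq\mathfrak q(u_r)$ the induced subcomplex $\Lambda(\sigma,u_r)$ of $\Lambda_j$ is a simplicial subdivision of $X(\sigma,u_r)$}; the case $\sigma=\mathfrak q(u_r)$ recovers $\Lambda(u_r)$ and $X(u_r)$. The base case $H(1)$ is immediate, since $\Lambda_1=X_1$ is a single labelled vertex. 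The whole point is that $X_j$ and $\Lambda_j$ are produced by parallel recursions: $X_j$ is obtained from $X_{j-1}$ by adjoining the closure $\overline{B(u)}=\{u\}\ast X(u)$, cut into the top cell $B(u)$ and the cells $B(\sigma,u)=\{u\}\ast X(\sigma,u)$, whereas $\Lambda_j=\Lambda_{j-1}\cup(\{u\}\ast\Lambda(u))$ is obtained by coning over $\Lambda(u)$.

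For the step $H(j-1)\Rightarrow H(j)$ I would isolate two ingredients. The first is the standard fact that coning preserves subdivisions: once $\Lambda(u)$ is known to subdivide $X(u)$, the simplicial cone $\{u\}\ast\Lambda(u)$ subdivides the topological cone $\{u\}\ast X(u)$, because $|\{u\}\ast\Lambda(u)|=\{u\}\ast|\Lambda(u)|=\{u\}\ast|X(u)|$ and every new simplex lies in one of the new cells. Concretely, for a simplex $\tau$ of $\Lambda(\sigma,u)$ one has $\{u\}\ast\tau\subseteq\{u\}\ast X(\sigma,u)=B(\sigma,u)$, while the simplices meeting the interior of the cone fill the single top cell $B(u)$; matching these against the maximal cells listed in Remark~\ref{facts}(ii) shows that the new simplices of $\Lambda_j$ subdivide the new cells of $X_j$. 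Together with $H(j-1)$ for $\Lambda_{j-1}\subseteq X_{j-1}$ this gives that $\Lambda_j$ subdivides $X_j$.

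The second, load-bearing, ingredient is that $\Lambda(\sigma,u)$ subdivides $X(\sigma,u)$ for the new generator $u=u_j$. Since $\mathfrak g(\tau;u)\in G(I_{j-1})$ for every nonempty $\tau$, both $\Lambda(\sigma,u)$ and $X(\sigma,u)$ are the induced subcomplexes of $\Lambda_{j-1}$ and $X_{j-1}$ on the common vertex set $W_\sigma=\{\mathfrak g(\tau;u):\emptyset\ne\tau\subseteq\sigma\}$, so the assertion follows from $H(j-1)$ \emph{provided} passing to this induced subcomplex is compatible with the subdivision. This is exactly where I expect the main obstacle to sit: a simplicial subdivision need not restrict to an induced subcomplex (a diagonal of a subdivided polygon has its vertices in a proper vertex subset yet strays outside the induced subcomplex), so one must show that a simplex of $\Lambda_{j-1}$ whose vertices lie in $W_\sigma$ is already contained in $X(\sigma,u)$, equivalently that its unique minimal cell $\overline{B(\rho,u')}$ has \emph{all} of its vertices in $W_\sigma$. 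Here I would use that $\Lambda_{j-1}$ introduces no new vertices and that, by Remark~\ref{facts}(i), every cell of $X_{j-1}$ is determined by its vertex set; the verification that $W_\sigma$ is precisely the vertex set of the cell $B(\sigma,u)$, and hence closed under passing to sub-cells, is where the regularity of the decomposition function, through the commutation relation~\eqref{commute}, should enter. Once this compatibility is established, $H(j-1)$ restricts to give $\Lambda(\sigma,u)$ subdividing $X(\sigma,u)$, which closes the induction.
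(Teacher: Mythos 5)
Your proposal is correct and follows what is essentially the paper's own (implicit) argument: the paper states the lemma as ``immediate,'' relying precisely on the fact that $X_j$ and $\Lambda_j$ are built by parallel recursions in which each step cones over $X(u)$ respectively $\Lambda(u)$, which is exactly the induction you carry out. Your version merely makes explicit the details the paper suppresses --- that coning preserves subdivisions, and that the induced-subcomplex definitions of $\Lambda(u)$ and $X(u)$ are compatible with the subdivision, the latter resting (as you correctly locate) on regularity via $\mathfrak{q}(\mathfrak{g}(\tau;u))\subseteq\mathfrak{q}(u)$ and the composition identity $\mathfrak{g}(\delta;\mathfrak{g}(\tau;u))=\mathfrak{g}(\delta\cup\tau;u)$, the case of proper $\sigma\subset\mathfrak{q}(u)$ being what the paper itself defers to Remark~\ref{xsigma}.
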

  
 \noindent A \emph{closure operator} on a finite set $E$ is a function $c:2^E\rightarrow 2^E$, such that 
 for all 
 $\sigma,\tau\subseteq E$,
 \begin{enumerate}
  \item[(CO1)] $\sigma\subseteq c(\sigma)$,
  \item[(CO2)] $\sigma\subseteq\tau$ implies $c(\sigma)\subseteq c(\tau)$,
  \item[(CO3)] $c(c(\sigma))=c(\sigma)$.
 \end{enumerate}
 A closure operator $c$ is said to be an \emph{anti-exchange closure} if, in addition, for all $a\neq b$ in 
 $E$, it satisfies the following anti-exchange axiom:
 \begin{enumerate}
 \item[(AE)\hspace{0.08in}] If $a,b\notin c(\sigma)$ and $a\in c(\{b\}\cup\sigma)$, then $b\notin c(\{a\}\cup\sigma)$.
 \end{enumerate}
 A finite set $E$ together with an anti-exchange closure $c$ on it is called a \emph{convex geometry}. 
 See Bj\"{o}rner and Ziegler~\cite{BZ}, for a comprehensive introduction 
 to a more general topic, greedoids.\\ Now let us 
 define a closure operator on $\mathfrak{q}(u)$: For a subset $\sigma$ of $\mathfrak{q}(u)$ we let 
 $c(\sigma)$ to be the largest subset $\tau$ of $\mathfrak{q}(u)$ such that 
 $\mathfrak{g}(\sigma;u)=\mathfrak{g}(\tau;u)$. The fact that this operator is well defined follows from the 
 simple fact that for 
 any $\delta_1$ and $\delta_2$ if $\mathfrak{g}(\delta_1;u)=\mathfrak{g}(\delta_2;u)$, then 
 $\mathfrak{g}(\delta_1;u)=\mathfrak{g}(\delta_1\cup\delta_2;u)$.
 
 \begin{prop}\label{convex}
 The pair $\langle \mathfrak{q}(u), c\rangle$ is a convex geometry.
 \end{prop}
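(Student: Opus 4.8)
The plan is to verify the three closure axioms (CO1)–(CO3) together with the anti-exchange axiom (AE) directly from the definition of $c$ and the key commutation identity~\eqref{commute}. The properties (CO1) and (CO3) are essentially built into the definition: since $\mathfrak{g}(\sigma;u)=\mathfrak{g}(\sigma;u)$, we certainly have $\sigma\subseteq c(\sigma)$, and idempotency follows because $c(\sigma)$ is already the \emph{largest} subset $\tau$ with $\mathfrak{g}(\tau;u)=\mathfrak{g}(\sigma;u)$, so applying $c$ again cannot enlarge it. For monotonicity (CO2) I would argue that if $\sigma\subseteq\tau$ then $\mathfrak{g}(\sigma;u)$ divides (in the decomposition-function sense) $\mathfrak{g}(\tau;u)$, and combine this with the well-definedness remark preceding the statement, namely that $\mathfrak{g}(\delta_1;u)=\mathfrak{g}(\delta_2;u)$ forces $\mathfrak{g}(\delta_1;u)=\mathfrak{g}(\delta_1\cup\delta_2;u)$; this should let me promote any $z\in c(\sigma)$ into $c(\tau)$ by showing $\mathfrak{g}(\tau\cup\{z\};u)=\mathfrak{g}(\tau;u)$.

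The heart of the matter is the anti-exchange axiom (AE), and I expect this to be the main obstacle. Suppose $a,b\notin c(\sigma)$ and $a\in c(\{b\}\cup\sigma)$; I must show $b\notin c(\{a\}\cup\sigma)$. Unwinding the definition, the hypothesis $a\in c(\{b\}\cup\sigma)$ says $\mathfrak{g}(\{a,b\}\cup\sigma;u)=\mathfrak{g}(\{b\}\cup\sigma;u)$, i.e. adjoining $a$ on top of $b$ and $\sigma$ does not change the decomposition function. Arguing by contradiction, I would assume $b\in c(\{a\}\cup\sigma)$ as well, meaning $\mathfrak{g}(\{a,b\}\cup\sigma;u)=\mathfrak{g}(\{a\}\cup\sigma;u)$. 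Writing $w$ for the monomial $u\cdot\prod_{y\in\sigma}y$ and setting $v=\mathfrak{g}(\sigma;u)=\mathfrak{g}(w)$, the regularity of $\mathfrak{g}$ lets me factor the decomposition of $w$ multiplied by $a$ and by $b$ through $v$, and then the commutation relation~\eqref{commute} in the form $\mathfrak{g}(a\,\mathfrak{g}(b\,v))=\mathfrak{g}(b\,\mathfrak{g}(a\,v))$ becomes available. The two displayed equalities would then force both $a$ and $b$ to already lie in $c(\sigma)$, contradicting the hypothesis $a,b\notin c(\sigma)$.

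The delicate point in executing this is bookkeeping: I need to translate the \emph{set-level} operator $c$ on $\mathfrak{q}(u)$ into \emph{monomial-level} applications of $\mathfrak{g}$, and be careful that each variable I adjoin indeed lies in the appropriate $\mathfrak{q}(\cdot)$ so that~\eqref{commute} applies — this is exactly where the regularity hypothesis $\mathfrak{q}(\mathfrak{g}(yu))\subseteq\mathfrak{q}(u)$ does its work, guaranteeing that $a,b$ remain legal to multiply after passing to $\mathfrak{g}(\cdot\,;u)$. I would therefore isolate a small lemma beforehand stating that for $y\in\mathfrak{q}(u)$ one has $\mathfrak{g}(\{y\}\cup\sigma;u)=\mathfrak{g}(y\cdot\mathfrak{g}(\sigma;u))$, reducing~\eqref{commute} to the clean symmetric statement used in the contradiction. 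Once that reduction is in place, (AE) should follow by a direct symmetry argument, and combined with (CO1)–(CO3) this establishes that $\langle\mathfrak{q}(u),c\rangle$ is a convex geometry.
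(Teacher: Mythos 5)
Your overall framework is the same as the paper's: (CO1) and (CO3) are immediate from the definition of $c$; (CO2) goes through the factorization identity $\mathfrak{g}(\delta\cup\sigma;u)=\mathfrak{g}(\delta;\mathfrak{g}(\sigma;u))$, which is exactly the lemma you propose to isolate (the paper states it without proof in the (CO2) step); and for (AE) you correctly translate the set-level hypotheses into monomial-level statements through $v=\mathfrak{g}(\sigma;u)$, using regularity to justify that $a\notin c(\sigma)$ is equivalent to $a\in\mathfrak{q}(v)$ and that $a\in c(\{b\}\cup\sigma)$ means $\mathfrak{g}(abv)=\mathfrak{g}(bv)$. One minor inaccuracy along the way: in (CO2) your claim that $\sigma\subseteq\tau$ makes $\mathfrak{g}(\sigma;u)$ ``divide'' $\mathfrak{g}(\tau;u)$ is neither true as a divisibility of monomials nor needed; the factorization identity alone gives $\mathfrak{g}(\tau;u)=\mathfrak{g}(\tau\cup c(\sigma);u)$, hence $c(\sigma)\subseteq c(\tau)$.

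The genuine gap is the punchline of (AE). Having reduced (by contradiction) to $\mathfrak{g}(av)=\mathfrak{g}(abv)=\mathfrak{g}(bv)$ with $a,b\in\mathfrak{q}(v)$, you assert that the commutation relation~\eqref{commute} ``forces both $a$ and $b$ to already lie in $c(\sigma)$.'' It does not: applied to this configuration, \eqref{commute} only yields $\mathfrak{g}(a\,\mathfrak{g}(bv))=\mathfrak{g}(b\,\mathfrak{g}(av))$, i.e.\ $\mathfrak{g}(aw)=\mathfrak{g}(bw)$ for the common value $w=\mathfrak{g}(av)=\mathfrak{g}(bv)$, which merely reproduces the same situation at an earlier generator and gives no contradiction by itself (you do not even know that $a,b\in\mathfrak{q}(w)$, and nothing rules out the iteration stabilizing). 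The step that actually closes the argument --- and the paper's key observation --- is elementary divisibility plus minimality of $G(I)$: $\mathfrak{g}(av)$ divides $av$ and $\mathfrak{g}(bv)$ divides $bv$, so a common value would divide $\gcd(av,bv)=v$ (since $a\neq b$ are distinct variables), and a minimal generator dividing the minimal generator $v$ must equal $v$, contradicting $a\in\mathfrak{q}(v)$, i.e.\ $\mathfrak{g}(av)\neq v$. With this observation \eqref{commute} is not needed in (AE) at all; indeed the paper argues directly rather than by contradiction, concluding $\mathfrak{g}(av)\neq\mathfrak{g}(bv)=\mathfrak{g}(abv)$ and hence $b\notin c(\{a\}\cup\sigma)$. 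As written, your proposal leaves the decisive inference unproved and attributes it to a tool that cannot deliver it.
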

\begin{proof}
 The axioms (CO1) and (CO3) clearly hold. Assume that $\tau$ is the union $\delta\cup\sigma$. From the equation 
 $\mathfrak{g}(\delta\cup\sigma;u)=\mathfrak{g}(\delta ;\mathfrak{g}(\sigma;u))$, it follows that 
 $\mathfrak{g}(\tau;u)=\mathfrak{g}(\delta\cup c(\sigma);u)$ 
and in particular $c(\sigma)\subseteq c(\tau)$.\\
Now, we shall verify the anti-exchange axiom. Let $\sigma$ be any subset of $\mathfrak{q}(u)$ and denote by $v$ 
the monomial minimal generator $\mathfrak{g}(\sigma;u)$. The condition that $a\notin c(\sigma)$ is equivalent 
to $a\in\mathfrak{q}(v)$. Now for a variable $b$ in $\mathfrak{q}(v)$ different from $a$, let 
$a\in c(\{b\}\cup\sigma)$, or equivalently $\mathfrak{g}(abv)=\mathfrak{g}(bv)$. The fact that 
$\mathfrak{g}(av)$ is a minimal monomial generator different from $v$ that divides $av$ implies that 
$\mathfrak{g}(av)\neq\mathfrak{g}(bv)=\mathfrak{g}(abv)$, and therefore $b\notin c(\{a\}\cup\sigma)$.
\end{proof}

\noindent The set of all closed sets of a convex geometry, i.e. those subsets that are fixed under the 
closure operator, forms a 
 lattice when the partial ordering is inclusion. This lattice is known 
 to be meet-distributive, see e.g.~\cite[Theorem3.3]{E}. 
 \begin{prop}\label{sh}
  $\Lambda(u)$ is a shellable $(l-1)$-dimensional ball. 
 \end{prop}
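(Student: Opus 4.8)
The plan is to connect the simplicial complex $\Lambda(u)$ to the order complex of the lattice of closed sets of the convex geometry $\langle \mathfrak{q}(u), c\rangle$ established in Proposition~\ref{convex}. First I would observe that the vertices of $\Lambda(u)$ are the distinct monomials $\mathfrak{g}(\sigma;u)$ as $\sigma$ ranges over subsets of $\mathfrak{q}(u)$, and that by definition of the closure operator these are in bijection with the closed sets of the convex geometry. The key structural fact is that a face of $\Lambda(u)$ consists of vertices $\mathfrak{g}(\sigma_1;u),\ldots,\mathfrak{g}(\sigma_k;u)$ that arise together from the cone construction, and one should check that these faces correspond precisely to the flags (chains) in the lattice of closed sets. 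Thus my first main step is to prove that $\Lambda(u)$ is isomorphic to the order complex of the meet-distributive lattice $L$ of closed sets of $\langle\mathfrak{q}(u),c\rangle$.

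Once this identification is in place, shellability follows from known results about meet-distributive lattices (equivalently, antimatroids / convex geometries). Specifically, meet-distributive lattices are supersolvable or at least admit an $EL$-labelling, so their order complexes are shellable; I would cite the meet-distributivity already noted in the excerpt via~\cite[Theorem 3.3]{E} and invoke the standard fact that the order complex of such a lattice is shellable. The remaining point is the dimension and the ball property. Since $\mathfrak{q}(u)$ has $l$ elements and the longest chain in the lattice of closed sets has length $l$ (the rank function being given by cardinality of the closed set, with the empty set and the full set $\mathfrak{q}(u)$ as bottom and top), the order complex of the proper part, or rather $\Lambda(u)$ itself, has dimension $l-1$. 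To see that it is a ball rather than a sphere I would argue that $\Lambda(u)$ is the cone from the apex vertex $u_1$-type bottom element, or more directly that it is collapsible / contractible because the top closed set $\mathfrak{q}(u)$ corresponds to a cone point in the construction; a shellable complex that is not a sphere is a ball, so contractibility plus shellability gives the ball conclusion.

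The hard part will be the first step: rigorously matching the faces of $\Lambda(u)$, which are defined recursively through the iterated cone construction $\Lambda_j = \Lambda_{j-1}\cup(\{u\}\ast\Lambda(u))$, with chains in the lattice of closed sets. This requires carefully tracking how the decomposition function $\mathfrak{g}$ interacts with the cone apexes at each inductive stage, using the commutation relation~\eqref{commute} and the regularity condition of Definition~\ref{de} to ensure that whenever two vertices $\mathfrak{g}(\sigma;u)$ and $\mathfrak{g}(\tau;u)$ span an edge, the corresponding closed sets $c(\sigma)$ and $c(\tau)$ are comparable in $L$. I expect the identity $\mathfrak{g}(\delta\cup\sigma;u)=\mathfrak{g}(\delta;\mathfrak{g}(\sigma;u))$ used in the proof of Proposition~\ref{convex} to be the workhorse here, letting me show that a face of $\Lambda(u)$ determines and is determined by a nested sequence of closed sets. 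Once the order-complex description is secured, the shellability and the ball property are comparatively routine consequences of the theory of convex geometries.
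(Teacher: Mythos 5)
Your overall route is the same as the paper's: identify $\Lambda(u)$ with the order complex of the lattice of closed sets of the convex geometry $\langle\mathfrak{q}(u),c\rangle$ from Proposition~\ref{convex}, and then appeal to the theory of meet-distributive lattices. (One small correction to the identification: since $X(u)$, and hence $\Lambda(u)$, only uses \emph{non-empty} subsets $\sigma$, the vertices correspond to $\widehat{L}=L\setminus\{\emptyset\}$, so $\Lambda(u)$ is the order complex of $\widehat{L}$, which still contains the top element $\mathfrak{q}(u)$ and is therefore a cone with apex the vertex $\mathfrak{g}(\mathfrak{q}(u);u)$.)

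However, your final step contains a genuine gap: the claim that ``a shellable complex that is not a sphere is a ball,'' or that shellability plus contractibility yields a ball, is false. A standard counterexample is three $2$-simplices glued along a common edge: it is pure, shellable (shell the facets in any order; each intersects the union of the previous ones in the common edge, which has codimension one), and contractible, but it is not a ball, since the link of an interior point of the common edge is three points. The correct general tool is the Danaraj--Klee theorem, which applies only to shellable \emph{pseudomanifolds}, and pseudomanifoldness of the order complex is exactly what you have not verified (it amounts to the diamond-type property that length-two intervals of $L$ have at most two middle elements). The paper sidesteps this entirely by quoting the theorem of Billera, Hsiao and Provan~\cite{BHP} that the order complex of the proper part of any meet-distributive lattice is a shellable ball; since $\Lambda(u)$ is the cone over that complex with apex corresponding to the top closed set, it is a shellable $(l-1)$-ball, the dimension count following from the fact that the closed-set lattice of a convex geometry on $l$ elements is graded of rank $l$ (\cite[Proposition 8.7.2(iv)]{BZ}). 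A further minor inaccuracy: meet-distributive lattices are not supersolvable in general; shellability of the order complex can instead be obtained from (dual) semimodularity or, again, directly from~\cite{BHP}, so you should replace that assertion by the citation. Your ``hard part,'' matching faces of the iterated cone construction with chains in $\widehat{L}$ via $\mathfrak{g}(\delta\cup\sigma;u)=\mathfrak{g}(\delta;\mathfrak{g}(\sigma;u))$, is exactly what the paper asserts (tersely) and is the right plan.
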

\begin{proof}
 Let $L$ be the lattice of closed sets of $\langle \mathfrak{q}(u), c\rangle$ and $\widehat{L}=L\setminus
 \{\emptyset\}$. The decomposition function $\mathfrak{g}(.;u)$ can be seen as a bijection between the elements
 of $\widehat{L}$ and vertices of $\Lambda(u)$, that maps every closed set $\sigma$ to $\mathfrak{g}(\sigma;u)$. 
 Furthermore, via this map the faces of $\Lambda(u)$ are precisely chains in $\widehat{L}$ and hence 
 $\Lambda(u)$ is the order complex of $\widehat{L}$. Now, since the order complex of the proper part of any 
 meet-distributive lattice is a shellable ball (see, e.g., \cite{BHP}) , $\Lambda(u)$ is a shellable ball. The argument about dimension 
 follows from~\cite[Proposition 8.7.2.(iv)]{BZ}.
\end{proof}
\noindent A simplicial complex $K$ is said to be a \emph{PL--ball} if a subdivision of $K$ is a subdivision of 
a simplex. A regular cell complex $\Gamma$ is a PL--ball if its barycentric subdivision is a PL-ball. 
Many properties that we expect balls to have fails when they are not PL. For instance, the complex obtained by 
gluing two balls of the same dimension along a common codimension one ball lying on their boundaries is not 
necessarily a ball, if they are not PL. We refer to~\cite[Section 4.7.(d)]{BLSWZ} and the references therein 
for a detailed discussion on the subject.\\ We mentioned that for 
our construction to be well-defined we need $X(u)$ to be a ball, but actually the inductive construction 
requires more, that is   
\begin{cor}
 $X(u)$ is an $(l-1)$-dimensional PL--ball. 
\end{cor}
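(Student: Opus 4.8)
The plan is to derive the PL property of the regular cell complex $X(u)$ from the shellability of its simplicial subdivision $\Lambda(u)$, rather than arguing about $X(u)$ directly. The starting point is the classical fact that a shellable simplicial ball is a PL--ball: a shelling realizes the complex as an iterated union of simplices, each attached to the preceding subcomplex along a ball in its boundary, and since simplices and their faces are honestly PL, every such step stays inside the PL category (see \cite[Section 4.7.(d)]{BLSWZ}). By Proposition~\ref{sh}, $\Lambda(u)$ is a shellable $(l-1)$-ball, hence it is a PL $(l-1)$-ball.

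It then remains to transfer this from $\Lambda(u)$ to $X(u)$. By Lemma~\ref{subdivision}, $\Lambda(u)$ is a simplicial subdivision of $X(u)$, and by definition $X(u)$ is a PL--ball precisely when its barycentric subdivision $\mathrm{sd}(X(u))$ is a PL--ball. Both $\Lambda(u)$ and $\mathrm{sd}(X(u))$ are simplicial subdivisions of the same regular cell complex $X(u)$, and I would produce a common simplicial refinement $\Gamma$ of the two. Granting such a $\Gamma$, the conclusion is immediate: $\Gamma$ refines $\Lambda(u)$, which---being a PL--ball---refines to a subdivision of a simplex, so $\Gamma$ is itself a subdivision of a simplex; and since $\Gamma$ simultaneously refines $\mathrm{sd}(X(u))$, it witnesses that $\mathrm{sd}(X(u))$ is a PL--ball. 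Thus $X(u)$ is a PL $(l-1)$-ball.

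The main obstacle is exactly the construction of the common refinement $\Gamma$, equivalently the verification that $\Lambda(u)$ and $\mathrm{sd}(X(u))$ induce the \emph{same} PL structure on $|X(u)|$. This is not a formality for arbitrary subdivisions of a topological ball; it is precisely the non-PL phenomenon warned about in the discussion preceding this corollary, where gluing balls along a boundary ball can fail to produce a ball when the pieces are not PL. I would handle it inductively, in step with the construction of $X_j$: assuming $X_{j-1}$ is a PL regular cell complex, one checks that each closed cell of $X(u)$ (inherited from $X_{j-1}$) is a PL--cell and that $\Lambda(u)$ restricts to a rectilinear, hence PL, subdivision on it. Two PL subdivisions of a PL--cell admit a common refinement, and since these cellwise refinements agree on shared boundary faces they glue to the desired global $\Gamma$. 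The general PL machinery required for the cellwise step and the gluing is collected in \cite[Section 4.7.(d)]{BLSWZ}.
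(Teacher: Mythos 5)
Your overall strategy is the same as the paper's: reduce everything to Proposition~\ref{sh} (shellability of $\Lambda(u)$) and Lemma~\ref{subdivision} ($\Lambda(u)$ subdivides $X(u)$), with the step ``shellable simplicial ball $\Rightarrow$ PL ball'' handled classically. The paper's published proof is a one-line citation of exactly these two results together with \cite[Corollary 2.2]{BHP}, and that corollary of Billera--Hsiao--Provan is precisely the transfer principle you set out to prove by hand in your last paragraph: that PL-ness passes from a simplicial subdivision to the regular cell complex it subdivides. So the first half of your argument is correct and faithful to the paper; the question is whether your hand-made version of the transfer step holds up.

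It does not, as written, and you half-admit this yourself. Two specific problems. First, the claim that $\Lambda(u)$ restricts to a ``rectilinear, hence PL'' subdivision of each closed cell of $X(u)$ has no basis: the cells $B(\sigma,v)$ are abstract regular cells produced by the gluings in Construction~\ref{cons}, with no linear or rectilinear structure whatsoever, so there is nothing rectilinear to invoke. Second, the assertion that ``two PL subdivisions of a PL--cell admit a common refinement'' is only available once you know that both triangulations are subdivisions \emph{with respect to one and the same PL structure} on the cell; but whether the restriction of $\Lambda(u)$ to a cell is PL-compatible with the cell's intrinsic PL structure (the one defined via its barycentric subdivision) is exactly the content that needs proof --- and since the Hauptvermutung fails for balls, it cannot be treated as a formality, as the discussion preceding the corollary in the paper warns. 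To close the circle your induction would have to carry, as part of its hypothesis, that every pair (closed cell, induced $\Lambda$-subdivision) is a compatible PL ball pair, and proving that amounts to reproving \cite[Corollary 2.2]{BHP}. The clean fixes are either to cite that corollary, as the paper does, or to actually carry out such a ball-pair induction in detail rather than gesturing at it.
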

\begin{proof}
It follows from Lemma~\ref{subdivision}, Proposition~\ref{sh} and ~\cite[Corollary 2.2]{BHP}.

\end{proof}

\begin{rem}\label{xsigma}
Let $\sigma$ be a fixed subset of $\mathfrak{q}(u)$. For a subset $\delta$ of $\sigma$ define 
$c_{\sigma}(\delta)$ to be the largest subset $\tau$ of $\sigma$ such that 
$\mathfrak{g}(\delta;u)=\mathfrak{g}(\tau;u)$. Then, in a similar way as in Proposition~\ref{convex},
one can show that $\langle \sigma, c_{\sigma}\rangle$ is a convex geometry. In fact, the same line of reasoning
as we used to prove the above corollary implies that $X(\sigma,u)$ is a PL-ball of dimension 
$\mid\sigma\mid-1$.

\end{rem}\qed

 \section{Main Result}

\noindent In this section we prove our main result. We begin by proving two auxiliary lemmas. 

\begin{lem}\label{l1}
 $X_I$ is contractible. 
\end{lem}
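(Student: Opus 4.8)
The plan is to prove contractibility by induction on $j$, showing that each complex $X_j$ arising in Construction~\ref{cons} is contractible; since $X_I=X_m$, this yields the claim. The base case is immediate, as $X_1$ is a single point. For the inductive step I would fix $u=u_j$, set $l=\lvert\mathfrak{q}(u)\rvert$, and exhibit $X_j$ as the union of two contractible subspaces, $X_{j-1}$ and the ball $B(u)$, glued along a contractible piece.

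First I would pin down the topology of the attachment. By step (iv) of Construction~\ref{cons}, the cells of $X_j$ not already in $X_{j-1}$ are the maximal $l$-ball $B(u)$ together with the cones $B(\sigma,u)=\{u\}\ast X(\sigma,u)$ for $\sigma\subsetneq\mathfrak{q}(u)$; all of these involve the new vertex $u$ and all lie in the closed ball $B(u)$, whose boundary sphere is $X(u)\cup(\{u\}\ast\partial X(u))$ by step (iii). Hence, as topological spaces, $X_j=X_{j-1}\cup B(u)$. The crucial point is to identify the overlap: the only cells of $B(u)$ lying in $X_{j-1}$ are those of $X(u)$, because every other new cell meets the new vertex $u$; concretely $B(\sigma,u)\cap X_{j-1}=X(\sigma,u)\subseteq X(u)$. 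Therefore $X_{j-1}\cap B(u)=X(u)$.

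Now I would feed three contractibility inputs into a gluing argument: $B(u)$ is an $l$-ball, hence contractible; $X(u)$ is an $(l-1)$-ball by the Corollary following Proposition~\ref{sh}, hence contractible; and $X_{j-1}$ is contractible by the inductive hypothesis. Since $X_{j-1}$, $B(u)$, and their intersection $X(u)$ are subcomplexes of the regular cell complex $X_j$, the inclusions $X(u)\hookrightarrow X_{j-1}$ and $X(u)\hookrightarrow B(u)$ are cofibrations, so $X_j$ is the homotopy pushout of $X_{j-1}\leftarrow X(u)\rightarrow B(u)$. As the homotopy pushout of contractible spaces over a contractible space is contractible, $X_j$ is contractible, completing the induction. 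Equivalently, one may deformation retract the ball $B(u)$ onto the codimension-one subball $X(u)$ of its boundary---here the PL structure established earlier guarantees that the pair $(B(u),X(u))$ behaves like $(D^{l-1}\times[0,1],\,D^{l-1}\times\{0\})$---and extend this retraction by the identity on $X_{j-1}$, giving a deformation retraction of $X_j$ onto $X_{j-1}$.

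I expect the only genuine subtlety to be the bookkeeping that $X_{j-1}\cap B(u)$ is \emph{exactly} $X(u)$ and nothing more, which rests on the observation that all other newly added cells are cones on the apex $u\notin X_{j-1}$. Once that identification is secured and the earlier results supplying that $B(u)$ and $X(u)$ are balls are invoked, contractibility of $X_j$ follows formally from the gluing lemma, and hence so does that of $X_I$.
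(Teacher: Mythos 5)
Your proof is correct and is essentially the paper's argument in different packaging: the paper notes that $X_j$ is homotopy equivalent to the mapping cone of the inclusion $f:X(u)\hookrightarrow X_{j-1}$, which is a homotopy equivalence since both spaces are contractible (induction plus the ball property of $X(u)$), so the cone is contractible. Your homotopy pushout of $X_{j-1}\leftarrow X(u)\rightarrow B(u)$ is exactly that mapping cone up to homotopy (with the contractible $B(u)$ in place of a point), and your careful identification $X_{j-1}\cap B(u)=X(u)$ merely makes explicit what the paper leaves implicit.
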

\begin{proof}
 Let $f:X(u)\hookrightarrow X_{j-1}$ be the inclusion map. Then $X_j$ is homotopy equivalent 
 to the mapping cone $C_f$ of $f$. Now since $X(u)$ and $X_{j-1}$ are contractible, 
 by \cite[Page 27]{Sp} $f$ is 
 a homotopy equivalence and therefore $C_f$ is contractible (see, e.g., \cite[Section 4.2]{Ha} for more 
 details).
\end{proof}
\noindent Let $I\subset S$ be a monomial ideal and $\mu$ a monomial in $S$. Then we denote by $I_{\leq\mu}$
 the ideal generated by those monomials in $G(I)$ that divide $\mu$. 
\begin{lem}\label{l2}
 Let $I\subset S$ be a monomial ideal with linear quotient and let $\mu$ be a monomial in $S$. Then 
 $I_{\leq\mu}$ has linear quotient. Moreover if $I$ is regular, then so is $I_{\leq\mu}$.
\end{lem}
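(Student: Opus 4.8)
The plan is to prove both assertions by exhibiting an explicit admissible order (and, in the regular case, showing the restricted decomposition function inherits regularity) rather than by appealing to an abstract structural result. Let $u_1,\ldots,u_m$ be a degree increasing admissible order of $G(I)$ with decomposition function $\mathfrak{g}$. The key observation is that $G(I_{\leq\mu})$ is precisely the subset of $G(I)$ consisting of those $u_i$ that divide $\mu$, and this subset, listed in the order inherited from $u_1,\ldots,u_m$, will be the candidate admissible order.

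First I would establish the linear quotient assertion. Write $G(I_{\leq\mu})=\{u_{i_1},\ldots,u_{i_r}\}$ with $i_1<\cdots<i_r$, and I must check that $\langle u_{i_1},\ldots,u_{i_{s-1}}\rangle : u_{i_s}$ is generated by variables for each $s$. The natural approach is to use the shelling type condition~\eqref{shelling} rather than colon ideals directly. Given $i_t<i_s$ with both in our index set, condition~\eqref{shelling} applied to $I$ produces some $k<i_s$ with $\mathrm{lcm}(u_k,u_{i_s})=x_t\cdot u_{i_s}$ and $\mathrm{lcm}(u_k,u_{i_s})$ dividing $\mathrm{lcm}(u_{i_t},u_{i_s})$. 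The point I would verify is that such a $u_k$ can be taken inside $G(I_{\leq\mu})$: since $\mathrm{lcm}(u_k,u_{i_s})$ divides $\mathrm{lcm}(u_{i_t},u_{i_s})$, and both $u_{i_t}$ and $u_{i_s}$ divide $\mu$, one gets that $u_k$ divides $\mathrm{lcm}(u_{i_t},u_{i_s})$, which divides $\mu$; hence $u_k\in G(I_{\leq\mu})$ (one must confirm $u_k$ is genuinely a minimal generator dividing $\mu$, i.e. that it survives into $G(I_{\leq\mu})$). Since~\eqref{shelling} is equivalent to admissibility, this yields the admissible order for $I_{\leq\mu}$.

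Next I would treat regularity. The decomposition function $\mathfrak{g}'$ of $I_{\leq\mu}$ with respect to the inherited order must be compared with the restriction of $\mathfrak{g}$. I expect $\mathfrak{g}'$ to agree with $\mathfrak{g}$ on the relevant monomials: for a variable $y\in\mathfrak{q}'(u_{i_s})$, the generator $\mathfrak{g}'(yu_{i_s})$ should coincide with $\mathfrak{g}(yu_{i_s})$, because $\mathfrak{g}(yu_{i_s})$ divides $yu_{i_s}$ which divides $y\mu$, but in fact divides $\mathrm{lcm}$-type products landing in $\mu$-divisors once one checks $yu_{i_s}\mid\mu$ forces $\mathfrak{g}(yu_{i_s})\mid\mu$. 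Granting this identification, the regularity condition $\mathfrak{q}'(\mathfrak{g}'(yu_{i_s}))\subseteq\mathfrak{q}'(u_{i_s})$ follows from $\mathfrak{q}(\mathfrak{g}(yu_{i_s}))\subseteq\mathfrak{q}(u_{i_s})$ together with the compatibility $\mathfrak{q}'(v)=\mathfrak{q}(v)\cap\{$variables relevant to $I_{\leq\mu}\}$.

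The main obstacle I anticipate is the careful bookkeeping around two points. The first is confirming that $\mathfrak{q}'(u_{i_s})$, the set of variables generating the colon ideal computed inside $I_{\leq\mu}$, is exactly $\mathfrak{q}(u_{i_s})$ (or a controllable subset thereof) rather than something strictly smaller that would break the identification of decomposition functions; this requires the divisibility argument of the previous paragraph to be run in both directions. The second, and genuinely delicate, point is verifying $\mathfrak{g}(yu_{i_s})\in G(I_{\leq\mu})$, i.e. that the decomposition function does not send a $\mu$-divisor to a generator failing to divide $\mu$. Here I would lean on the fact that $\mathfrak{g}(yu_{i_s})$ is the minimal-index generator lying in $\langle u_1,\ldots,u_{j}\rangle$ containing $yu_{i_s}$ and that it \emph{divides} $yu_{i_s}$; combined with $y\in\mathfrak{q}(u_{i_s})$ so that $yu_{i_s}$ is only one variable away from $u_{i_s}\mid\mu$, a direct check should show divisibility by $\mu$. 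Once both bookkeeping lemmas are in place the regularity inclusion is formal.
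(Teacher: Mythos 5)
Your proof of the first assertion is correct and is essentially the paper's own argument: restrict the admissible order of $G(I)$ to the generators dividing $\mu$, and verify the shelling type condition~\eqref{shelling} by noting that the witness $u_k$ divides $\mathrm{lcm}(u_{i_t},u_{i_s})$, which divides $\mu$, so $u_k$ is a minimal generator of $I$ dividing $\mu$ and hence lies in $G(I_{\leq\mu})$.

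The regularity half, however, has a genuine gap, located exactly where you write that ``once both bookkeeping lemmas are in place the regularity inclusion is formal.'' First, your compatibility statement $\mathfrak{q}'(v)=\mathfrak{q}(v)\cap\{\text{variables relevant to }I_{\leq\mu}\}$, with one \emph{fixed} set of variables, is false: the correct statement is generator-dependent, namely $\mathfrak{q}'(v)=\{z\in\mathfrak{q}(v)\,:\,zv\mid\mu\}$. For instance, for the stable ideal $I=\langle x^3,x^2y,xy^2\rangle$ and $\mu=x^2y^2$ one has $x\in\mathfrak{q}'(xy^2)$ but $x\in\mathfrak{q}(x^2y)\setminus\mathfrak{q}'(x^2y)$, so no fixed set of variables can work. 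Your identification $\mathfrak{g}'(yu)=\mathfrak{g}(yu)$ \emph{is} provable for $y\in\mathfrak{q}'(u)$ (the witness $g\in G(I_{\leq\mu})$ dividing $yu$ but not $u$ satisfies $\deg_y(g)=\deg_y(u)+1\leq\deg_y(\mu)$, whence $yu\mid\mu$ and so $\mathfrak{g}(yu)\mid\mu$; note this uses $y\in\mathfrak{q}'(u)$, not merely $y\in\mathfrak{q}(u)$). But with the correct description of $\mathfrak{q}'$, the inclusion $\mathfrak{q}'(\mathfrak{g}'(yu))\subseteq\mathfrak{q}'(u)$ is no longer formal: given $z\in\mathfrak{q}'(w)$ with $w=\mathfrak{g}(yu)$, regularity of $\mathfrak{g}$ yields $z\in\mathfrak{q}(u)$, but you must still prove $zu\mid\mu$, i.e.\ $\deg_z(u)<\deg_z(\mu)$. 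From $zw\mid\mu$ and $w\mid yu$ this follows when $z=y$ or when $\deg_z(w)=\deg_z(u)$; in the remaining case $z\neq y$, $\deg_z(w)<\deg_z(u)$, nothing in your two lemmas rules out $\deg_z(u)=\deg_z(\mu)$. Ruling that out is the actual content of the paper's proof and is done by contradiction: if $\deg_z(u)=\deg_z(\mu)$, then $v=\mathfrak{g}'(z\mathfrak{g}'(yu))$ divides $zyu$ and divides $\mu$, so $\deg_z(v)\leq\deg_z(\mu)=\deg_z(u)$ forces $v\mid yu$; since $v$ occurs strictly earlier than $\mathfrak{g}'(yu)$ in the admissible order, this contradicts the minimality in the definition of the decomposition function (while if $\deg_z(u)<\deg_z(\mu)$, then $\mathfrak{g}(zu)$ is an earlier generator dividing $zu\mid\mu$ and witnesses $z\in\mathfrak{q}'(u)$). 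Without this degree-comparison argument your proposal does not close; the step you call formal is precisely where the paper's proof does its work.
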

\begin{proof}
 Let $u_1,\ldots, u_m$ be an admissible order of minimal generators of $I$ and assume that $u_{i_1},\ldots ,u_
 {i_t}$ generates $I_{\leq\mu}$, where $i_1<\ldots <i_t$. We show that $u_{i_1},\ldots ,u_{i_t}$ is an 
 admissible order for $I_{\leq\mu}$. For $s<t$, the shelling type condition~\ref{shelling}, guarantees that 
 there exists $l<i_t$ such that $\mbox{lcm}(u_l,u_{i_t})=u_{i_t} x_r$ for some $x_r$ 
 and that $\mbox{lcm}(u_l,u_{i_t})$ divides $\mbox{lcm}(u_{i_s},u_{i_t})$. In particular $u_l$ divides $\mu$ and hence $u_l\in I_{\leq\mu}$.\\
 For the second part, denote by $\mathfrak{g}'$ the decomposition function of $I_{\leq\mu}$. Let $y$ be a 
 variable such that $\mathfrak{g}'(yu_{i_j})\neq u_{i_j}$ and $z$ another variable such that 
 $\mathfrak{g}'(z\mathfrak{g}'(yu_{i_j}))\neq \mathfrak{g}'(yu_{i_j})$. To show that $\mathfrak{g}'$ is 
 regular, it is enough to show that $\mathfrak{g}'(zu_{i_j})\neq u_{i_j}$. Assume not. Then for any 
 $l<i_j$ such that $u_l$ divides $zu_{i_j}$, $u_l$ does not divide $\mu$ and in particular $\mbox{deg}_z(u_{i_j})=
 \mbox{deg}_z(\mu)$, where $\mbox{deg}_z$ stands for the degree of $z$ in the monomial. On the other hand, 
 $v=\mathfrak{g}'(z\mathfrak{g}'(yu_{i_j}))$ divides $zyu_{i_j}$. So, 
 $v$ divides $yu_{i_j}$, since $\mbox{deg}_z(v)\leq \mbox{deg}_z(\mu)=\mbox{deg}_z(u_{i_j})$. This is a contradiction, 
 since $v=\mathfrak{g}'(z\mathfrak{g}'(yu_{i_j}))$ appears earlier than  $\mathfrak{g}'(yu_{i_j})$ in the 
 admissible order. 
 
\end{proof}

\noindent Now we are in position to prove our main result. 

\begin{thm}
 If $I$ has a regular linear quotient, then the labelled regular cell complex $X_I$ 
 supports the minimal free resolution of $I$.
\end{thm}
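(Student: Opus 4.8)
The plan is to verify that the labelled regular cell complex $X_I$ supports the minimal free resolution of $S/I$ by checking the two requirements of the theory of cellular resolutions: first, that the cellular chain complex of $X_I$ (with the monomial labels) is exact, i.e. actually resolves $S/I$; and second, that the resulting resolution coincides with the Herzog--Takayama resolution $\mathbf{F}_I$, which is already known to be minimal.

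For the exactness, I would invoke the standard criterion (from Bayer--Sturmfels, see~\cite{BS} or~\cite{MS}): a labelled regular cell complex $X$ supports a free resolution of $S/I$ if and only if, for every monomial $\mu$, the subcomplex $X_{\leq\mu}$ consisting of all cells whose label divides $\mu$ is empty or acyclic (indeed contractible). This is exactly why the two auxiliary lemmas were proved. By Remark~\ref{facts}(i), the vertices of $X_I$ are the minimal generators $u\in G(I)$ with label $u$, and the cells $B(\sigma,u)$ carry the label equal to the least common multiple of the labels of their vertices; hence the cells of $X_{\leq\mu}$ are precisely those $B(\sigma,u)$ all of whose vertices lie in $I_{\leq\mu}$. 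The key point is that this selected subcomplex is itself the cell complex associated, via Construction~\ref{cons}, to the ideal $I_{\leq\mu}$: by Lemma~\ref{l2}, $I_{\leq\mu}$ again has a regular linear quotient (with the induced admissible order), so its complex $X_{I_{\leq\mu}}$ is defined, and one checks that $X_{\leq\mu}=X_{I_{\leq\mu}}$ because the decomposition function of $I_{\leq\mu}$ agrees with the restriction of $\mathfrak{g}$ on the relevant monomials. Then Lemma~\ref{l1}, applied to $I_{\leq\mu}$ in place of $I$, shows that $X_{\leq\mu}$ is contractible, hence acyclic. This establishes that $X_I$ supports a free resolution of $S/I$.

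For the identification with $\mathbf{F}_I$, I would match the combinatorial data of $X_I$ with the algebraic data of the Herzog--Takayama complex cell by cell. By Remark~\ref{facts}(i) there is a bijection between the cells $B(\sigma,u)$ of $X_I$ and the basis elements $f(\sigma;u)$ of $\mathbf{F}_I$, sending a cell of homological dimension $|\sigma|-1$ to a basis element of homological degree $|\sigma|+1$, with matching multidegree $\mathfrak{g}(\sigma;u)\cdot\mathrm{lcm}$. It then remains to check that the cellular differential of $X_I$ reproduces $\partial=\delta-\mu$. This is precisely the content of Remark~\ref{facts}(ii): the maximal cells on the boundary of $B(\sigma,u)$ are the $B(\sigma-\{y\},u)$ and the $B(\sigma-\{y\},\mathfrak{g}(yu))$, which correspond respectively to the two summands $\mu$ and $\delta$ of the Herzog--Takayama differential, with signs governed by the orientation convention $(-1)^{\alpha(\sigma;y)}$ and with monomial coefficients $y$ and $yu/\mathfrak{g}(yu)$ read off from the quotient of multidegrees of adjacent cells. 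Once the cellular boundary map is seen to equal $\partial$, the cellular resolution supported by $X_I$ is literally $\mathbf{F}_I$, which is minimal by~\cite[Theorem 1.12]{HT}; in particular $X_I$ supports the minimal free resolution.

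I expect the main obstacle to be the identification $X_{\leq\mu}=X_{I_{\leq\mu}}$ needed in the exactness step, rather than the sign bookkeeping in the differential. One must argue carefully that the inductive construction of Construction~\ref{cons} applied to $I_{\leq\mu}$ produces exactly the subcomplex of $X_I$ selected by divisibility: this requires that for every generator $u$ of $I_{\leq\mu}$ and every relevant $\sigma$, the value $\mathfrak{g}(\sigma;u)$ computed in $I$ already lies in $I_{\leq\mu}$, so that the decomposition functions coincide on the pieces that build the cells. The regularity hypothesis (Definition~\ref{de}) and equation~\eqref{commute} are what make this compatibility hold, and the degree argument at the end of Lemma~\ref{l2} is the precise mechanism ensuring that passing to $I_{\leq\mu}$ does not create spurious cells or destroy existing ones. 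Granting this, contractibility of each $X_{\leq\mu}$ follows uniformly from Lemma~\ref{l1}, and the proof concludes.
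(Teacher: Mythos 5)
Your proposal is correct in substance, and its first half coincides exactly with the paper's proof: the paper likewise verifies the Bayer--Sturmfels criterion by observing that $X_{\leq\mu}=X_{I_{\leq\mu}}$ and then invoking Lemmas~\ref{l1} and~\ref{l2}; the compatibility you flag as the main obstacle (that $\mathfrak{g}'$ agrees with $\mathfrak{g}$ on monomials dividing $\mu$, so that Construction~\ref{cons} applied to $I_{\leq\mu}$ yields precisely the divisibility-selected subcomplex) is exactly what the paper's opening ``observe'' compresses, and your sketch of it is the right one, since any generator of $I$ dividing a monomial that divides $\mu$ is automatically a generator of $I_{\leq\mu}$. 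Where you genuinely diverge is the minimality step. The paper proves minimality directly and combinatorially: for a cell $c=B(\tau,v)$ with label $v\prod_{z\in\tau}z$, each facet either contains the apex $v$ (label $v\prod_{z\in\tau\setminus\{y\}}z$, visibly different) or has vertex set built from $\mathfrak{g}(yv)$ (label $\mathfrak{g}(yv)\prod_{z\in\tau\setminus\{y\}}z$, of strictly smaller degree since the admissible order is degree-increasing, so $\deg\mathfrak{g}(yv)\leq\deg v$). You instead identify the cellular chain complex with $\mathbf{F}_I$ via Remark~\ref{facts} and import minimality from \cite[Theorem 1.12]{HT}; this is legitimate and in fact proves the paper's \emph{subsequent} theorem (that the cellular resolution \emph{is} the Herzog--Takayama resolution) along the way, but it buys this at the cost of a sign-verification you only assert: one must check that orientations of the cells of the regular complex $X_I$ can be chosen so that the incidence numbers are exactly $(-1)^{\alpha(\sigma;y)}$ as in $\delta-\mu$. (The paper is equally terse on this point, but its own proof of the present theorem sidesteps it entirely, needing only that incidence coefficients of a regular cell complex are units, so that distinct labels on incident cells already force minimality --- this is the more economical route.) Two small corrections: the cell $B(\sigma,u)$ has geometric dimension $|\sigma|$, not $|\sigma|-1$ (it is the cone $\{u\}\ast X(\sigma,u)$ over a $(|\sigma|-1)$-ball), matching homological degree $|\sigma|+1$; and its multidegree is the lcm of its vertex labels, which equals $u\prod_{z\in\sigma}z$, not an expression involving $\mathfrak{g}(\sigma;u)$.
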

\begin{proof}
First observe that for any monomial $\mu$ the subcomplex $X_{\leq\mu}$ of $X_I$, consisting of all cells 
 with a label that divides $\mu$, is the same as the complex $X_{I_{\leq\mu}}$ and hence is contractible by 
 lemmas \ref{l1} and \ref{l2}.\\
Now we shall show that any two cells with a non trivial containment relation have different labels. 
Let $c$ be a cell of $X_I$. Then the vertices of $c$ are $v$ together with $\{ \mathfrak{g}(\sigma;v) \}
_{\sigma\subseteq\tau}$, for some $v$ and some $\tau\subseteq\mathfrak{q}(v)$ and in particular the label of 
$c$ is $v\prod_{z\in\tau}z$. For a maximal cell $c'$ of $c$, 
we have only two possibilities: either $c'$ contains $v$ or not. In the former case the other vertices of 
$c'$ are $\{ \mathfrak{g}(\sigma;v) \}
_{\sigma\subseteq\tau'}$, where $\tau'=\tau\setminus \{y\}$ for some variable $y$. So, the label of $c'$ is 
different from the label of $c$. In the latter case, the vertices of $c'$ are $\mathfrak{g}(yv)$ together with 
$\{ \mathfrak{g}(\sigma;\mathfrak{g}(yv)) \}_{\sigma\subseteq\tau'}$, for some $y\in\mathfrak{q}(v)$,
where $\tau'=\tau\setminus \{y\}$. The label of $c'$ is then $\mathfrak{g}(yv)\prod_{z\in\tau'}z$ which is 
different from the label of $c$, since $\mbox{deg}(\mathfrak{g}(yv))\leq \mbox{deg}(v)$ and $\left|\tau' \right|<
\left|\tau \right|$.\\

\end{proof}

The bijection between cells of the cellular complex $X_I$ and the base elements in the Herzog-Takayama 
resolution $\mathbf{F}_I$ (Remark~\ref{facts}) shows that the cellular resolution obtained from our construction 
coincides with the Herzog-Takayama resolution. Thus, we have the folloewing:

\begin{thm}
 Let $I$ be an ideal with a regular linear quotient. Then the Herzog-Takayama resolution $\mathbf{F}_I$ is
 a minimal cellular resolution of $S/I$.
\end{thm}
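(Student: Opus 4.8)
The plan is to observe that the final statement is essentially a bookkeeping corollary of the preceding theorem, rather than an independent result requiring new machinery. The preceding theorem already establishes that the labelled regular cell complex $X_I$ supports the minimal free resolution of $S/I$. What remains is to identify this cellular resolution with the Herzog--Takayama resolution $\mathbf{F}_I$ on the nose. So the first thing I would do is recall the cellular chain complex attached to $X_I$ with its monomial labels: its free basis in each homological degree is indexed by the cells of $X_I$, and the differential is the labelled cellular boundary map, whose entries are the incidence numbers of the regular cell complex multiplied by the appropriate quotients of labels.

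Next I would invoke Remark~\ref{facts}(i), which gives a bijection between the cells $B(\sigma,u)$ of $X_I$ and the basis elements $f(\sigma;u)$ of $\mathbf{F}_I$, matching the homological degree $|\sigma|+1$ on both sides. Under this bijection I would check that the label of the cell $B(\sigma,u)$, namely $u\prod_{y\in\sigma}y$, coincides with the multidegree carried by $f(\sigma;u)$. The crux is then to compare the two differentials cell by cell. Using Remark~\ref{facts}(ii), the maximal cells of $B(\sigma,u)$ are exactly $B(\sigma-\{y\},u)$ for $y\in\sigma$ and $B(\sigma-\{y\},\mathfrak{g}(yu))$ for those $y$ with $\sigma-\{y\}\subseteq\mathfrak{q}(\mathfrak{g}(yu))$. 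These two families correspond precisely to the two summands $\mu$ and $\delta$ in the Herzog--Takayama differential $\partial=\delta-\mu$, so the supports of the two differentials agree.

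The remaining task is to match coefficients, and here the only genuine subtlety is the sign. On the cellular side the signs are the incidence numbers of the regular cell complex, while on the resolution side they are the explicit $(-1)^{\alpha(\sigma;y)}$ factors from the construction. Since $X_I$ is regular, its incidence numbers are $\pm 1$ and are determined up to the standard sign conventions; because the faces $B(\sigma,u)$ are themselves cones/balls arising from the ordered structure on $\mathfrak{q}(u)$, the incidence numbers can be read off from the ordering $x_1<\cdots<x_n$, which is exactly what $\alpha(\sigma;y)$ records. I would therefore fix an orientation of each cell compatible with the total order on the variables and verify that the induced incidence number of $B(\sigma-\{y\},u)$ in $B(\sigma,u)$ equals $(-1)^{\alpha(\sigma;y)}$, and similarly for the $\delta$-faces, with the relative minus sign between $\delta$ and $\mu$ accounted for by the two being glued along opposite portions of the boundary sphere of $B(\sigma,u)$.

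The main obstacle I expect is precisely this sign verification: ensuring that a single global choice of orientations simultaneously reproduces the $\mu$-summand and the $\delta$-summand with the correct relative sign $\partial=\delta-\mu$, rather than matching each differential only up to an independent sign. Once the orientations are chosen so that deleting a variable $y$ from $\sigma$ contributes the sign $(-1)^{\alpha(\sigma;y)}$ regardless of which of the two face types results, the coefficients $\tfrac{yu}{\mathfrak{g}(yu)}$ on the $\delta$-faces are forced by the label-quotient rule for cellular differentials, and the identification of the two complexes is complete. Since both complexes are minimal free resolutions of $S/I$ supported on the same data, this identification upgrades the previous theorem to the claimed statement that $\mathbf{F}_I$ is itself a minimal cellular resolution.
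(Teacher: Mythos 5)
Your proposal is correct and follows essentially the same route as the paper, which deduces the theorem directly from the preceding one via the cell-to-basis bijection of Remark~\ref{facts}. The sign verification you flag is the only point the paper leaves implicit, and it is harmless: in a regular CW complex every length-two interval of the face poset is a diamond, so $\partial^2=0$ in $\mathbf{F}_I$ forces the signs $(-1)^{\alpha(\sigma;y)}$ to constitute a valid incidence function (alternatively, uniqueness of the minimal multigraded resolution makes an on-the-nose sign match unnecessary for the statement).
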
\qed

\paragraph{\textbf{Open Problem.}} Batzies and Welker~\cite[Proposition 4.3]{BW} proved that for a monomial 
ideal $I$ with a linear quotient there exists a minimal free Lyubeznik-resolution\footnote{The reader should be aware that Batzies and Welker's 
Lyubeznik-resolution is a generalization of the one constructed by G.~Lyubeznik. The generalized 
Lyubeznik-resolution is not necessarily simplicial, whereas the Lyubeznik's resolution always is.} of $S/I$, 
and thus there exists 
a CW complex that supports the resolution of $I$ (see~\cite{BW}, for more details). When 
$I$ has a regular linear quotient, it 
would be interesting to determine whether our complex $X_I$ necessarily coincides with the one coming from the 
Lyubeznik-resolution or not.

\subsection*{Acknowledgments}
 I would like to thank Anders Bj\"{o}rner for helpful discussions especially for bringing convex geometries
 to my attention, Volkmar Welker for some very helpful comments. I would also like to thank the anonymous 
 referee for many helpful sugesstions that led to improvements of the exposition.

\end{document}